\newtheorem{theorem}{Theorem}[section]
\newtheorem{lemma}[theorem]{Lemma}
\newtheorem{corollary}[theorem]{Corollary}
\theoremstyle{definition}
\theoremstyle{remark}
\numberwithin{equation}{section}
\begin{document}

\title{Eigenvalues of Laplace operators on non-bipartite graphs}


\author{$Hongjun~Wang^{1}$}
\address{}
\curraddr{}
\email{}
\thanks{Hongjun~Wang: Department of Mathematics, Hebei University of technology, Tianjin 300401, People's Republic of China. email: 18135515783@163.com}

\author{$Hongmei~Song^{2}$}
\address{}
\curraddr{}
\email{}
\thanks{Hongmei~Song: Department of Mathematics, Hebei University of technology, Tianjin 300401, People's Republic of China. email: shm040356@163.com}

\author{$Jia~Zhao^{3,*}$}
\address{}
\curraddr{}
\email{}
\thanks{*Corresponding author: Jia Zhao, Department of Mathematics, Hebei University of technology, Tianjin 300401, People's Republic of China. email: zhaojia@hebut.edu.cn}
\subjclass[2010]{34B45, 47E05}

\keywords{The metric graphs; Laplace operators; Vertex conditions; Eigenvalues.}

\date{}

\dedicatory{}

\begin{abstract}
This paper considers the comparison between the eigenvalues of Laplace operators with the standard conditions and the anti-standard conditions on non-bipartite graphs which are equilateral or inequilateral. First of all, we show the calculation of the eigenvalues of Laplace operators on equilateral metric graphs with arbitrary edge length. Based on this method, we use the properties of the cosine function and the arccosine function to find the comparison between the eigenvalues of Laplace operators with the standard conditions and the anti-standard conditions on equilateral non-bipartite graphs. In addition, we give the inequalities between standard and anti-standard eigenvalues on a special inequilateral non-bipartite graph.
\end{abstract}

\maketitle
\section{Introduction}
\label{Intro}

Differential operators on metric graphs are a class of unbounded linear operators which are widely used in physics, chemistry, and engineering, see \cite{1,2,3}. The spectral theory of differential operators on metric graphs has become an important part of the spectral theory of differential operators in recent decades, for example \cite{4}.~In this paper, we focus on the Laplace operators on metric graphs, i.e., the second derivative operators on each edge of the graphs.~The most common vertex conditions for the Laplace operators on metric graphs are standard conditions (or Kirchhoff conditions) and anti-standard conditions (or anti-Kirchhoff conditions), see Section 2 below for more details.~We denote Laplace operators subjected to standard conditions and anti-standard conditions on metric graph $\mathrm{\Gamma}$ by $L^{st}(\mathrm{\Gamma})$ and $L^{a/st}(\mathrm{\Gamma})$.

Our main aim is to provide the comparison between the eigenvalues of $L^{st}(\mathrm{\Gamma})$ and $L^{a/st}(\mathrm{\Gamma})$ on non-bipartite graphs with arbitrary edge length.~There have been some results about the comparison between the eigenvalues of $L^{st}(\mathrm{\Gamma})$ and $L^{a/st}(\mathrm{\Gamma})$ on bipartite graphs which can be found in \cite{7,5,6}.~Inspired by \cite{5} which treats the eigenvalues of $L^{st}(\mathrm{\Gamma})$ and $L^{a/st}(\mathrm{\Gamma})$ on equilateral metric graphs with edge length 1, we give the calculation of the eigenvalues of $L^{st}(\mathrm{\Gamma})$ and $L^{a/st}(\mathrm{\Gamma})$ on an equilateral metric graph $\mathrm{\Gamma}$ with arbitrary edge length.~In \cite{6}, the authors show that the positive eigenvalues of Laplace operators with standard conditions and anti-standard conditions on bipartite graphs are equal, and the equation related to the number of even cycles was given. Next, we assume that graph $\mathrm{\Gamma}$ is an equilateral non-bipartite graph, then provide necessary and sufficient conditions for the eigenvalues of $L^{st}(\mathrm{\Gamma})$ and $L^{a/st}(\mathrm{\Gamma})$ to satisfy the following inequality:
 \begin{equation}\label{1.1}
\begin{aligned}
\lambda_{k+1}(L^{st}(\mathrm{\Gamma}))\geq \lambda_{N-n+k}(L^{a/st}(\mathrm{\Gamma})),\quad k\in \mathbb{N},
\end{aligned}
\end{equation}
where $N$ denotes the number of edges, $n$ denotes the number of vertices.

The paper is organized as follows. We introduce two main vertex conditions for Laplace operators in Section 2 and give the calculation of the eigenvalues of $L^{st}(\mathrm{\Gamma})$ and $L^{a/st}(\mathrm{\Gamma})$ on an equilateral metric graph $\mathrm{\Gamma}$ with arbitrary edge length in Section 3. Based on the calculation of the eigenvalues of $L^{st}(\mathrm{\Gamma})$ and $L^{a/st}(\mathrm{\Gamma})$, we give the comparison between the eigenvalues of $L^{st}(\mathrm{\Gamma})$ and $L^{a/st}(\mathrm{\Gamma})$ on a non-bipartite graph $\mathrm{\Gamma}$ with arbitrary edge length by using the properties of the cosine function and the arccosine function in Section 4. We show how the eigenvalues of $L^{a/st}(\mathrm{\Gamma})$ change when vertices of an equilateral odd cycle are increased in corollary \ref{4.1}. The comparison between the eigenvalues of $L^{st}(\mathrm{\Gamma})$ and $L^{a/st}(\mathrm{\Gamma})$ on equilateral non-bipartite graphs is described in the previous part. As a supplement, we give the inequalities between standard and anti-standard eigenvalues on a special inequilateral non-bipartite graph in the final Section. We find that there are similar results on a special inequilateral non-bipartite graph as on equilateral non-bipartite graphs.
\section{Preliminaries}

A metric graph $\mathrm{\Gamma}$ in the paper consists of a finite set of vertices $\mathcal{V}=\{v_{1},\cdots,v_{n}\}$ and a finite set $\mathcal{E}=\{e_{1},\cdots,e_{N}\}$ of edges connecting the vertices, where $n, N$ denote the number of vertices and edges respectively. Moreover, the graph $\mathrm{\Gamma}$ is connected, and each edge $e_{i}$ is assigned a positive length $l_{e_{i}} \in(0,\infty )$. Finally, the graph $\mathrm{\Gamma}$ is also assumed to be simple, i.e., the graph $\mathrm{\Gamma}$ contains no loops, and at most one edge can join two vertices in $\mathrm{\Gamma}$. We denote the total length of a graph by $L(\mathrm{\Gamma})=\sum\limits l_{e_{i}}$ and say that the graph $\mathrm{\Gamma}$ is equilateral if $l_{e_{i}}=l_{e_{j}}$ for any $e_{i}, e_{j}\in \mathcal{E}$. Any edge $e_{i}\in\mathcal{E}$ will be identified with interval $[0,l_{e_{i}}]$, then we write that $o(e_{i})$ and $t(e_{i})$ denote the starting vertex and ending vertex of edge $e_{i}$ respectively.

Let $\mathcal{V}_{1},\mathcal{V}_{2}\subset \mathcal{V}$ be two disjoint sets and satisfy $\mathcal{V}_{1}\cup \mathcal{V}_{2}=\mathcal{V}$. If $e_{i}\cap \mathcal{V}_{1}\neq\emptyset, e_{i}\cap \mathcal{V}_{2}\neq\emptyset$ for any edge $e_{i}\in \mathcal{E}$, we say that $\mathrm{\Gamma}$ is bipartite.

Two vertices $v_{i}$ and $v_{j}$ will be called adjacent (denoted $v_{i}\sim v_{j}$) if there is an edge $e_{k}$ connecting $v_{i}$ and $v_{j}$.
 We give the adjacency matrix $\mathcal{A}:=(a_{ij})_{n\times n}$ of $\mathrm{\Gamma}$ by
\begin{equation}
a_{ij}=\left\{
\begin{aligned}
&1, \qquad if~v_{i}\sim v_{j},\\&0,\qquad if~v_{i}\nsim v_{j},
\end{aligned}
\right.
\end{equation}
then $\mathcal{A}$ is symmetric. We define the transition matrix~$\mathcal{Z}:=(z_{ij})_{n\times n}$ by
\begin{center}
$\mathcal{Z}:=\mathrm{Diag}(\mathcal{A}e)^{-1}\mathcal{A}.$
\end{center}
where $e:=(1)_{n\times 1}$. The signed incidence matrix $\mathcal{D}:=(d_{ij})_{n\times N}$ of $\mathrm{\Gamma}$ is given by
\begin{equation}
d_{ij}=\left\{
\begin{aligned}
-1,&\qquad if~v_{i}=o(e_{j}),\\1,&\qquad if~v_{i}=t(e_{j}),\\0,&\qquad else.
\end{aligned}
\right.
\end{equation}
We give the index mapping $s(i,j)$: $I_{\mathcal{V}}\times I_{\mathcal{V}}\rightarrow I_{\mathcal{E}}$ as follows
\begin{equation}
s(i,j)=\left\{
\begin{aligned}
k,&\qquad if~e_{k}=\{v_{i},v_{j}\},\\1,&\qquad else.
\end{aligned}
\right.
\end{equation}
where~$I_{\mathcal{V}}:=\{1,...,n\}$ and $I_{\mathcal{E}}:=\{1,...,N\}$.~Let $L^{2}(\mathrm{\Gamma})$ denote the Hilbert space $\oplus_{i}L^{2}[0,l_{e_{i}}]$ with the inner product
 $$(f,g)_{L^{2}(\mathrm{\Gamma})}=\sum\limits_{i}\displaystyle\int_{0}^{l_{e_{i}}}f_{i}(x)\overline{g_{i}(x)}\mathrm{d}x,$$
where $f_{i}(x)$ denotes the restriction $f(x)|_{e_{i}}$.~We denote the maximal Laplace operator $\mathcal{L}_{max}$ as follows
\begin{equation}
\left\{
\begin{aligned}
 & \mathcal{L}_{max}f=-f''(x),~f\in \mathrm{Dom}(\mathcal{L}_{max}),\\&\mathrm{Dom}(\mathcal{L}_{max})=\{f\in L^{2}(\mathrm{\Gamma}):~f_{i},~f'_{i}\in AC[0,l_{e_{i}}],\\ &\qquad\qquad\qquad \qquad~~ for~ each~ i\in I_{\mathcal{E}},~\mathcal{L}_{max}f\in L^{2}(\mathrm{\Gamma})\},
\end{aligned}
\right.
\end{equation}
where $AC[0,l_{e_{i}}]$ represents the set of all absolutely real-valued continuous functions on interval $[0,l_{e_{i}}]$.~The standard Laplace operator $L^{st}(\mathrm{\Gamma})$ (also called Kirchhoff Laplace operator) is defined by
\begin{equation}
\left\{
\begin{aligned}
 & L^{st}(\mathrm{\Gamma})f=-f''(x),~f\in \mathrm{Dom}(L^{st}(\mathrm{\Gamma})),\\
 &\mathrm{Dom}(L^{st}(\mathrm{\Gamma}))=\{f\in \mathrm{Dom}(\mathcal{L}_{max}):~f_{1}(v)=f_{2}(v)=\cdots=f_{d_{v}}(v),\\&\qquad\qquad \qquad \quad f'_{1}(v)+f'_{2}(v)+\cdots +f'_{d_{v}}(v)=0\},
\end{aligned}
\right.
\end{equation}
where $d_{v}$ represents the degree of the vertex $v$, and $f'_{i}(v)$ is the derivative at the vertex $v$ taken along the edge $e_{i}$ in the outgoing direction.~The anti-standard Laplace operator $L^{a/st}(\mathrm{\Gamma})$ (also called anti-Kirchhoff Laplace operator) is defined by
\begin{equation}
\left\{
\begin{aligned}
 & L^{a/st}(\mathrm{\Gamma})f=-f''(x),~f\in \mathrm{Dom}(L^{a/st}(\mathrm{\Gamma})),\\
 &\mathrm{Dom}(L^{a/st}(\mathrm{\Gamma}))=\{f\in \mathrm{Dom}(\mathcal{L}_{max}):~f'_{1}(v)=f'_{2}(v)=\cdots=f'_{d_{v}}(v),\\&\qquad\qquad \qquad \qquad
 f_{1}(v)+f_{2}(v)+\cdots +f_{d_{v}}(v)=0\}.
\end{aligned}
\right.
\end{equation}
The spectra of $L^{st}(\mathrm{\Gamma})$ and $L^{a/st}(\mathrm{\Gamma})$ consist of the isolated,
nonnegative eigenvalues with finite multiplicities.
\section{Standard and anti-standard eigenvalues on equilateral metric graphs}

We assume that the graph $\mathrm{\Gamma}$ is an equilateral metric graph.~For each
directed edge $e_{i}$ of~$\mathrm{\Gamma}$, let $[0,l_{e}]$ be a real interval of length $l_{e}$.~We now find the real-valued function $u\in \mathrm{Dom}(\mathcal{L}_{max})$ that satisfies the standard conditions or satisfies the anti-standard conditions. We define the boundary value function matrix $U(x)$ as follows
\begin{center}
  $U(x):=(u_{ij}(x))_{n\times n},\qquad ~u_{ij}(x)=a_{ij}u_{s(i,j)}(\dfrac{l_{e}+l_{e}d_{i,s(i,j)}}{2}-xd_{i,s(i,j)}),$
\end{center}
where $u_{i}(x)$ denotes the restriction $u(x)|_{e_{i}}$.
Hence the equation
$$-u_{i}''(x)=\lambda u_{i}(x),~\forall~i\in\{1,\cdots,N\}$$ can be translated into
\begin{equation}\label{2.2}
-U''(x)=\lambda U(x).
\end{equation}
Then we have
$$U^{*}(x)=U(l_{e}-x),\qquad \qquad U'(x)^{*}=-U'(l_{e}-x).$$
We define the two matrices
\begin{center}
$\Phi:=U(0),\qquad \Psi:=U'(0).$
\end{center}

We consider the standard eigenvalue problem
\begin{equation}\label{2.1}
\left\{
\begin{aligned}
&-u_{j}''(x)=\lambda u_{j}(x),~\forall~j\in\{1,\cdots,N\},\\&u_{j}(v_{i})=u_{k}(v_{i}),~\forall~v_{i}\in e_{j}\cap e_{k},\\&\sum_{j=1}^{N}d_{ij}u_{j}'(v_{i})=0,~\forall~i\in\{1,\cdots,n\},
\end{aligned}
\right.
\end{equation}
then they could be represented by $U(x)$ as follows
\begin{equation}\label{3.1}
\left\{
\begin{aligned}
&-U''(x)=\lambda U(x),\\& U(0)=\phi e^{*}\cdot \mathcal{A},\\& U'(0)e=0,
\end{aligned}
\right.
\end{equation}
where $\phi:=(u(v_{i}))_{n\times1}.$
We consider the anti-standard eigenvalue problem
\begin{equation}\label{2.3}
\left\{\begin{aligned}
&-u_{j}''(x)=\lambda u_{j}(x),~\forall~j\in\{1,\cdots,N\},\\&d_{ij}u'_{j}(v_{i})=d_{ik}u'_{k}(v_{i}),~\forall~v_{i}\in e_{j}\cap e_{k},\\& \sum_{j=1}^{N}d_{ij}^{2}u_{j}(v_{i})=0,~\forall~i\in\{1,\cdots,n\},
\end{aligned}
\right.
\end{equation}
then they could be represented by $U(x)$ as follows
\begin{equation}\label{3.2}
\left\{
\begin{aligned}
&-U''(x)=\lambda U(x),\\&U'(0)=\psi e^{*}\cdot \mathcal{A},\\& U(0)e=0,
\end{aligned}
\right.
\end{equation}
where $\psi:=(u'(v_{i}))_{n\times1}.$
Let $U(x)$ be a nontrivial solution of \eqref{2.2} corresponding to the
eigenvalue $\lambda$. Then $\lambda\in[0, \infty)$ and the form of $U(x)$ is
 \begin{equation}\label{3.3}
U(x)=\left\{
\begin{aligned}
&\Phi+\dfrac{\Phi^{*}-\Phi}{l_{e}}x,~&~\lambda=0,\\&\cos(\sqrt{\lambda }x)\Phi+\dfrac{\sin(\sqrt{\lambda }x)}{\sqrt{\lambda }}\Psi,~&~\lambda>0.
\end{aligned}
\right.
\end{equation}

Let $P:=(p_{ij})_{n\times m}$, $Q:=(q_{ij})_{n\times m}$ and $R:=(r_{ij})_{n\times m}$ be three real matrices. We define the Hadamard product as follows
\begin{center}
  $P\cdot Q=(p_{ij}q_{ij})_{n\times m},$
\end{center} then
\begin{center}
  $P\cdot Q=Q\cdot P,$\quad
$(P\cdot Q)^{*}= P^{*}\cdot Q^{*},$\quad
$(P+Q)\cdot R=P \cdot R+Q \cdot R.$
\end{center}
Moreover, if~$x:=(x_{i})_{n\times1}$~we have
\begin{center}
 $ ((P\cdot Q)x)_{i}=(P\mathrm{Diag}(x_{i})Q^{*})_{ii}.$
\end{center}
Let $\mathrm{\Gamma}$ be a metric graph with
adjacency matrix $\mathcal{A}:=(a_{ij})_{n\times n}$. We define the three matrix spaces
$$\mathcal{M}(\mathrm{\Gamma}):=\{M=(m_{ij})_{n\times n}:~a_{ij}=0\Rightarrow m_{ij}=0\},~$$
$$\quad\mathcal{M}^{-}(\mathrm{\Gamma}):=\{M\in \mathcal{M}(\mathrm{\Gamma}):~M^{*}=-M~and~Me=0\},$$
$$\mathcal{M}^{+}(\mathrm{\Gamma}):=\{M\in \mathcal{M}(\mathrm{\Gamma}):~M^{*}=M~and~Me=0\}.$$
We need to know the multiplicities of the eigenvalues of $L^{st}(\mathrm{\Gamma})$ and $L^{a/st}(\mathrm{\Gamma})$ by the dimensions of the space $\mathcal{M}^{-}(\mathrm{\Gamma})$~and~$\mathcal{M}^{+}(\mathrm{\Gamma})$which can be found in \cite{8}.~The dimensions of $\mathcal{M}^{-}(\mathrm{\Gamma})$~and~$\mathcal{M}^{+}(\mathrm{\Gamma})$ are
\begin{equation}
\begin{aligned}
\mathrm{dim}(\mathcal{M}^{-})=N-n+1,
\end{aligned}
\end{equation}
\begin{equation}\mathrm{dim}(\mathcal{M}^{+})= \begin{cases}N-n+1&\quad if~\mathrm{\Gamma}~is~bipartite, \\N-n&\quad if~\mathrm{\Gamma}~is~non\mathrm{-}bipartite.
 \end{cases}\end{equation}
Let $D$ be a $n\times n$ real matrix. We call that $D$ is reducible if there exists a $n\times n$ permutation matrix $P$ such that
$$PDP^{\top}=\begin{bmatrix}
D_{11}&0\\D_{21}&D_{22}
\end{bmatrix},$$
where $D_{11}$ is $r\times r$ matrix and $D_{21}$ is $(n-r)\times r$ matrix and $D_{22}$ is $(n-r)\times (n-r)$ matrix. We say that $D$ is irreducible if there is no such a matrix $P$.

We want to obtain the eigenvalues of $L^{st}(\mathrm{\Gamma})$ and $L^{a/st}(\mathrm{\Gamma})$ on equilateral metric graphs by the eigenvalues of the transition matrix $\mathcal{Z}$. By using the theory in \cite[Lemma 2.36]{5}, we can give the following lemma.
\begin{lemma}\label{h1}
 Let graph $\mathrm{\Gamma}$ be a metric graph with the transition matrix $\mathcal{Z}$.~Then
the transition matrix $\mathcal{Z}$ is irreducible and $\sum_{j=1}^{n}z_{ij}=1$ for all $i\in\{1,\cdots n\}$. Moreover $\mathcal{Z}$ has the real eigenvalues~$\mu_{1},\mu_{2},\cdots,\mu_{n}$ which can be sorted. According to their multiplicities, we have
$$1=\mu_{1}>\mu_{2}\geq\cdots\geq\mu_{n}\geq-1.$$
Furthermore, if $\mu_{n}=-1$ then $\mathrm{\Gamma}$ is bipartite.
\end{lemma}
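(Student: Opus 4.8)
The plan is to reduce everything to the symmetric normalised adjacency matrix and then read off the spectral information from two explicit quadratic forms. Write $\mathcal{W}:=\mathrm{Diag}(\mathcal{A}e)$, so that $d_{i}:=(\mathcal{A}e)_{i}=\sum_{j}a_{ij}$ is the degree of $v_{i}$; since $\mathrm{\Gamma}$ is connected we have $d_{i}>0$ for every $i$, hence $\mathcal{W}$ is invertible and $\mathcal{Z}=\mathcal{W}^{-1}\mathcal{A}$ is well defined. The row-sum statement is then immediate: $\sum_{j}z_{ij}=d_{i}^{-1}\sum_{j}a_{ij}=1$, i.e. $\mathcal{Z}e=e$. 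For irreducibility I would note that $z_{ij}=a_{ij}/d_{i}$ has exactly the same zero pattern as $\mathcal{A}$ (because $d_{i}>0$), and that a nonnegative matrix with the symmetric zero pattern of a graph is irreducible precisely when that graph is connected; since $\mathrm{\Gamma}$ is connected by assumption, $\mathcal{Z}$ is irreducible. Finally, passing to $S:=\mathcal{W}^{-1/2}\mathcal{A}\mathcal{W}^{-1/2}$, the similarity $\mathcal{Z}=\mathcal{W}^{-1/2}S\mathcal{W}^{1/2}$ shows that $\mathcal{Z}$ and the real symmetric matrix $S$ have the same eigenvalues $\mu_{1},\dots,\mu_{n}$ with the same multiplicities; in particular these are real and may be sorted as $\mu_{1}\ge\cdots\ge\mu_{n}$.

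The two bounds $-1\le\mu_{i}\le1$ I would extract from the quadratic forms of $I\pm S$. Writing $g=\mathcal{W}^{-1/2}w$ for an arbitrary vector $w$, a short computation (using $\sum_{i}d_{i}g_{i}^{2}=\sum_{i\sim j}(g_{i}^{2}+g_{j}^{2})$ over edges) gives
\begin{equation*}
\langle (I-S)w,\,w\rangle=\sum_{i\sim j}(g_{i}-g_{j})^{2}\ge 0,\qquad
\langle (I+S)w,\,w\rangle=\sum_{i\sim j}(g_{i}+g_{j})^{2}\ge 0 .
\end{equation*}
Hence $-I\le S\le I$, so every $\mu_{i}\in[-1,1]$ and in particular $\mu_{n}\ge-1$. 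Taking $w=\mathcal{W}^{1/2}e$, so that $g\equiv 1$, yields $Sw=w$; thus $1$ is an eigenvalue and, being the largest possible, equals $\mu_{1}$.

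The remaining assertions follow from the equality cases together with connectedness. If $\langle(I-S)w,w\rangle=0$ then $g_{i}=g_{j}$ across every edge, so $g$ is constant on the connected graph $\mathrm{\Gamma}$; the eigenspace for $\mu=1$ is therefore one-dimensional and $\mu_{2}<\mu_{1}=1$. For the final claim, assume $\mu_{n}=-1$ and choose $w\neq0$ with $Sw=-w$; then $\langle(I+S)w,w\rangle=0$ forces $g_{i}=-g_{j}$ across every edge. Since $w\neq0$ we have $g\not\equiv0$, and connectedness shows $g$ is nowhere zero (a zero would propagate along edges to all of $\mathcal{V}$). Thus $g$ changes sign across every edge, and setting $\mathcal{V}_{1}=\{v_{i}:g_{i}>0\}$, $\mathcal{V}_{2}=\{v_{i}:g_{i}<0\}$ gives two disjoint sets with $\mathcal{V}_{1}\cup\mathcal{V}_{2}=\mathcal{V}$ such that each edge meets both, i.e. $\mathrm{\Gamma}$ is bipartite.

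The only genuinely delicate points are the strictness $\mu_{2}<1$ and the sign-alternation argument for bipartiteness: both rely on using connectedness to upgrade a local relation on $g$ (equality, respectively sign reversal, across a single edge) to a global statement, and on the observation that an eigenvector satisfying such a relation cannot vanish at a single vertex without vanishing identically. Everything else is routine linear algebra. Alternatively, the simplicity of the top eigenvalue and the $\mu_{n}=-1$ dichotomy may be quoted directly from the Perron--Frobenius theory of irreducible nonnegative matrices, exactly as in \cite[Lemma 2.36]{5}.
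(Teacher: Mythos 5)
Your proposal is correct: the row-sum computation, the irreducibility argument, the symmetrization $S=\mathcal{W}^{-1/2}\mathcal{A}\mathcal{W}^{-1/2}$, the two sum-of-squares identities $\langle (I\mp S)w,w\rangle=\sum_{i\sim j}(g_{i}\mp g_{j})^{2}$, and the equality-case analyses (constancy of $g$ at $\mu=1$, sign alternation and non-vanishing of $g$ at $\mu=-1$) all check out, and together they establish every assertion of the lemma. It is worth pointing out, however, that the paper does not prove this lemma at all: it is stated as an import, ``by using the theory in \cite[Lemma 2.36]{5}'', i.e.\ the authors quote Klawonn's result, which rests on the Perron--Frobenius theory of irreducible nonnegative (here row-stochastic) matrices. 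So your argument is genuinely different in character: it replaces a citation by a self-contained piece of spectral graph theory. The citation buys brevity, with the simplicity of the top eigenvalue and the $\mu_{n}=-1$ dichotomy obtained as black-box corollaries of Perron--Frobenius; your route buys more, since Perron--Frobenius by itself only controls the spectral radius, and the reality of the \emph{entire} spectrum of the non-symmetric matrix $\mathcal{Z}$ requires the similarity to the symmetric $S$ in any case. Your explicit quadratic forms then yield the bounds $\pm 1$, the simplicity of $\mu_{1}$, and the bipartition at $\mu_{n}=-1$ directly, with the role of connectedness made completely transparent: it is used exactly twice, to propagate the local relations $g_{i}=g_{j}$, respectively $g_{i}=-g_{j}$ together with non-vanishing, across the whole vertex set. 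The one step you assert without proof --- that a nonnegative matrix whose symmetric zero pattern is that of a connected graph is irreducible --- is the standard equivalence between irreducibility and strong connectivity of the associated digraph, and is in any case the same fact the quoted lemma relies on; making it explicit would close the only loose end in an otherwise complete proof.
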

\begin{theorem}\label{t1}
 Let $\mathrm{\Gamma}$ be an equilateral metric graph with the transition matrix $\mathcal{Z}$. Then the spectrum of $L^{st}(\mathrm{\Gamma})$ is given by
\begin{center}
  $\sigma(L^{st}(\mathrm{\Gamma}))=\{0\} \cup\{\lambda>0:\cos(\sqrt{\lambda}l_{e})\in\sigma(\mathcal{Z})\}\cup\{\lambda>0:\cos(\sqrt{\lambda}l_{e})=-1\}.$
\end{center}
The multiplicities of the eigenvalues are given by
\begin{equation}\notag
m(\lambda)=\left\{
\begin{aligned}
&1, \qquad\qquad\qquad\qquad\qquad\qquad~if~\lambda=0,
\\&\mathrm{dimker}(\mathcal{Z}-\cos(\sqrt{\lambda}l_{e})I),~if~\sin(\sqrt{\lambda}l_{e})\neq0,\\
&N-n+2, \qquad\qquad\qquad\qquad~if~\cos(\sqrt{\lambda}l_{e})=1~and ~\lambda>0,
\\&N-n+2, \qquad\qquad\qquad\qquad~if~\cos(\sqrt{\lambda}l_{e})=-1~ and ~\mathrm{\Gamma} ~is~ bipartite,\\
&N-n ,\qquad \qquad\qquad\qquad if~\cos(\sqrt{\lambda}l_{e})=-1~ and ~\mathrm{\Gamma} ~is~non\mathrm{-}bipartite,
\end{aligned}
\right.
\end{equation}
where $I$ is an identity matrix.
\end{theorem}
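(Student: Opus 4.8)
The plan is to solve the scalar equation $-u_j''=\lambda u_j$ edge by edge, substitute the general solution into the standard vertex conditions \eqref{2.1}, and read off an algebraic eigenvalue problem for the transition matrix $\mathcal{Z}$. On the edge $\{v_i,v_j\}$ parametrised from $v_i$ I write, for $\lambda>0$, $u_{ij}(x)=\phi_i\cos(\sqrt{\lambda}x)+\beta_{ij}\sin(\sqrt{\lambda}x)$, which already builds in continuity at $v_i$ through $\Phi=U(0)$ as in \eqref{3.1}. The whole argument then splits according to whether $\sin(\sqrt{\lambda}l_e)$ vanishes, i.e. whether $\cos(\sqrt{\lambda}l_e)\in(-1,1)$ or $\cos(\sqrt{\lambda}l_e)=\pm1$.

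In the generic range $\sin(\sqrt{\lambda}l_e)\neq0$ I would use continuity at the far endpoint, $u_{ij}(l_e)=\phi_j$, to solve $\beta_{ij}=(\phi_j-\phi_i\cos(\sqrt{\lambda}l_e))/\sin(\sqrt{\lambda}l_e)$, so that the outgoing derivative $\Psi_{ij}=u_{ij}'(0)=\sqrt{\lambda}\,\beta_{ij}$ is expressed entirely through the vertex vector $\phi$. Imposing the Kirchhoff condition $U'(0)e=0$ row by row collapses to $\sum_j a_{ij}\phi_j=\cos(\sqrt{\lambda}l_e)\,d_i\phi_i$, that is $\mathcal{Z}\phi=\cos(\sqrt{\lambda}l_e)\phi$. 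Hence $\lambda$ is an eigenvalue exactly when $\cos(\sqrt{\lambda}l_e)\in\sigma(\mathcal{Z})$, and since the whole eigenfunction is recovered injectively from $\phi$, its multiplicity equals $\dim\ker(\mathcal{Z}-\cos(\sqrt{\lambda}l_e)I)$. The case $\lambda=0$ runs in parallel using the affine form of \eqref{3.3}: the same computation gives $\mathcal{Z}\phi=\phi$, and by Lemma \ref{h1} the eigenvalue $1$ of $\mathcal{Z}$ is simple with eigenvector $e$, forcing $u$ to be constant, so $m(0)=1$.

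The heart of the proof is the degenerate range $\sin(\sqrt{\lambda}l_e)=0$, where $\beta_{ij}$ is no longer pinned down by $\phi$ and one must count the extra freedom through $\mathcal{M}^{\pm}(\mathrm{\Gamma})$. Here I would decompose each edge solution into its cosine (vertex-driven) part and its sine part and track how reparametrising from the opposite endpoint $v_j$ acts on the coefficients. When $\cos(\sqrt{\lambda}l_e)=1$ the endpoint continuity forces $\phi_i=\phi_j$ across every edge, so connectedness leaves only the one-dimensional constant cosine mode; the sine coefficients obey $\beta_{ji}=-\beta_{ij}$ and, under Kirchhoff, have zero row sums, hence form $\mathcal{M}^{-}(\mathrm{\Gamma})$ of dimension $N-n+1$, giving total multiplicity $N-n+2$. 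When $\cos(\sqrt{\lambda}l_e)=-1$ the endpoint relation becomes $\phi_j=-\phi_i$, solvable by a global $\pm1$ signing precisely when $\mathrm{\Gamma}$ is bipartite (one cosine dimension) and impossible otherwise (no cosine dimension), while the sine coefficients now satisfy $\beta_{ji}=\beta_{ij}$ with zero row sums and thus fill $\mathcal{M}^{+}(\mathrm{\Gamma})$. Feeding in $\dim\mathcal{M}^{-}=N-n+1$ and $\dim\mathcal{M}^{+}=N-n+1$ (bipartite) or $N-n$ (non-bipartite) yields the remaining two lines $N-n+2$ and $N-n$.

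The main obstacle I anticipate is the bookkeeping in this last step: one must verify that reparametrisation flips the sign of the sine coefficients exactly when $\cos(\sqrt{\lambda}l_e)=1$ and preserves it when $\cos(\sqrt{\lambda}l_e)=-1$ (so the correct class $\mathcal{M}^{-}$ versus $\mathcal{M}^{+}$ appears), and that the cosine and sine contributions are genuinely independent and together exhaust the solution space. This analysis also explains the three-set form of the spectrum: since $1\in\sigma(\mathcal{Z})$ always and $-1\in\sigma(\mathcal{Z})$ iff $\mathrm{\Gamma}$ is bipartite, the $\cos(\sqrt{\lambda}l_e)=-1$ modes on a non-bipartite graph are invisible to $\sigma(\mathcal{Z})$ and must be supplied by the separate set $\{\lambda>0:\cos(\sqrt{\lambda}l_e)=-1\}$. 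Granting the dimension formulas for $\mathcal{M}^{\pm}(\mathrm{\Gamma})$ quoted from \cite{8} and the irreducibility and spectral bounds of $\mathcal{Z}$ from Lemma \ref{h1}, assembling these cases produces both the spectrum and the full multiplicity table.
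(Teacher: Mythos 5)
Your proposal is correct and follows essentially the same route as the paper's proof: the same case split ($\lambda=0$; $\sin(\sqrt{\lambda}l_e)\neq0$ reducing to $\mathcal{Z}\phi=\cos(\sqrt{\lambda}l_e)\phi$; and the degenerate cases $\cos(\sqrt{\lambda}l_e)=\pm1$ counted via the constant/signed vertex mode plus $\mathcal{M}^{\mp}(\mathrm{\Gamma})$), with the only difference being that you work entrywise with scalar coefficients $\phi_i,\beta_{ij}$ where the paper packages the same data into the matrices $\Phi,\Psi$ and the symmetry relations $\Psi^{*}=\mp\Psi$, $\Phi^{*}=\pm\Phi$. Your sign bookkeeping under reparametrisation and the bipartiteness criterion for solving $\phi_j=-\phi_i$ match the paper's argument exactly.
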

\begin{proof}
We consider the first case $\lambda=0$. From \eqref{3.3} we know that all solutions of \eqref{2.2} corresponding to $\lambda=0$, and the respective derivatives are obtained by
$$U(x)=\Phi+\dfrac{\Phi^{*}-\Phi}{l_{e}}x,\qquad U'(x)=\dfrac{\Phi^{*}-\Phi}{l_{e}}.$$
By using $\Phi=U(0)=\phi e^{*}\cdot \mathcal{A}$, we get a new expression
 $$U'(0)=\dfrac{\Phi^{*}-\Phi}{l_{e}} =\dfrac{(e\phi^{*}-\phi e^{*})\cdot \mathcal{A}}{l_{e}}.$$
 Since $U'(0)e=0$, then we have
 \begin{align*}
 &(\mathcal{A}\cdot e \phi^{*})e=(\mathcal{A}\cdot\phi e^{*})e \\
 \Rightarrow&(\mathcal{A}\phi e ^{*})_{ii}=(\mathcal{A}e\phi^{*})_{ii}\\
 \Leftrightarrow &\mathcal{A}\phi =\mathrm{Diag}(\mathcal{A}e)\phi\\
 \Leftrightarrow& \mathrm{Diag}(\mathcal{A}e)^{-1}\mathcal{A}\phi=\phi\\
 \Leftrightarrow &\mathcal{Z}\phi=\phi.
\end{align*}
Thus $\phi$ is an eigenvector of $\mathcal{Z}$ belonging to the eigenvalue $1$ with the multiplicity $1$, then the multiplicity of the eigenvalue $\lambda=0$ is 1.

We consider the second case $\lambda>0$. From \eqref{3.3} we see that all solutions of \eqref{2.2}, and the respective derivatives are obtained by
$$U(x)=\cos(\sqrt{\lambda }x)\Phi+\dfrac{\sin(\sqrt{\lambda }x)}{\sqrt{\lambda }}\Psi,~~U'(x)=-\sqrt{\lambda }\sin(\sqrt{\lambda }x)\Phi+\cos(\sqrt{\lambda }x)\Psi.
 $$

We assume that $\sin(\sqrt{\lambda}l_{e})\neq0$. Since $U(l_{e})=U^{*}(0)=\Phi^{*}$ and $\Phi=U(0)=\phi e^{*}\cdot \mathcal{A}$, we obtain that
\begin{align*}
\Psi&=\dfrac{\sqrt{\lambda }}{\sin(\sqrt{\lambda }l_{e})}(\Phi^{*}-\Phi\cos(\sqrt{\lambda }l_{e}))\\
& =\dfrac{\sqrt{\lambda }}{\sin(\sqrt{\lambda }l_{e})}(e\phi^{*}-\cos(\sqrt{\lambda }l_{e})\phi e^{*})\cdot \mathcal{A}.
\end{align*}
Due to $U'(0)e=0$ and $\Psi=U'(0)$, we have
\begin{align*}
&(e \phi^{*}\cdot \mathcal{A} )e=(\cos(\sqrt{\lambda }l_{e})\phi e^{*}\cdot \mathcal{A})e \\
 \Rightarrow&(\mathcal{A}\phi e ^{*})_{ii}=\cos(\sqrt{\lambda }l_{e})(\mathcal{A}e\phi^{*})_{ii}\\
 \Leftrightarrow &\mathcal{A}\phi =\cos(\sqrt{\lambda }l_{e})\mathrm{Diag}(\mathcal{A}e)\phi\\
 \Leftrightarrow&\mathcal{Z}\phi=\cos(\sqrt{\lambda }l_{e})\phi.
\end{align*}
Then multiplicity of the eigenvalue $\lambda$ is dimker$(\mathcal{Z}-\cos(\sqrt{\lambda}l_{e})I)$ when $\sin(\sqrt{\lambda}l_{e})\neq0$.

For the remaining case of $\sin(\sqrt{\lambda}l_{e})=0$, we will distinguish the cases $\cos(\sqrt{\lambda }l_{e})=1$ and $\cos(\sqrt{\lambda }l_{e})=-1$.

Case 1: We will begin with the $\cos(\sqrt{\lambda }l_{e})=1$. Since every solution $\Phi^{*}=\Phi=\phi e^{*}\cdot \mathcal{A}$, the solution space with vanishing $\Psi$ is one dimension. On the other hand, we have $\Psi^{*}=-\Psi$ by using $U'(l_{e})=-U'(0)^{*}=-\Psi^{*}$. Then according to $\Psi e=U'(0)e=0$, the solution space with vanishing $\Phi$ of \eqref{3.1} is isomorphic to $\mathcal{M}^{-}$ with $\dim(\mathcal{M}^{-})=N-n+1$. Hence the multiplicity of $\lambda$ is $N-n+2$.

Case 2: When $\cos(\sqrt{\lambda }l_{e})=-1$, we have $\Psi=\Psi^{*}$ by using $U'(l_{e})=-U'(0)^{*}=-\Psi^{*}$. Then according to $\Psi e=U'(0)e=0$, the solution space with vanishing $\Phi$ of \eqref{3.1} is isomorphic to $\mathcal{M}^{+}$ with $\dim(\mathcal{M}^{+})$ which is either $N-n+1$ or $N-n$ depending on whether $\mathrm{\Gamma}$ is bipartite or not. On the other hand, since $\Phi=\phi e^{*}\cdot \mathcal{A}$, the solution space with vanishing $\Psi$ is one dimension. But since $U(l_{e})=U^{*}(0)=\Phi^{*}$, we have $\Phi=-\Phi^{*}$ which is possible for non-trivial $\Phi$ if $\mathrm{\Gamma}$ is bipartite, we cannot obtain $\Phi=-\Phi^{*}$ if $\mathrm{\Gamma}$ is  non-bipartite.~Hence the multiplicity of $\lambda$ is $N-n+2$ if $\mathrm{\Gamma}$ is bipartite, the multiplicity of $\lambda$ is $N-n$ if $\mathrm{\Gamma}$ is non-bipartite. We have proven the theorem.
\end{proof}
\begin{theorem}\label{t1}
Let graph $\mathrm{\Gamma}$ be an equilateral metric graph with the transition matrix $\mathcal{Z}$. Then spectrum of $L^{a/st}(\mathrm{\Gamma})$ is given by
  \begin{center}
  $\sigma(L^{a/st}(\mathrm{\Gamma}))=\{0\}\cup\{\lambda>0:-\cos(l_{e}\sqrt{\lambda})\in\sigma(\mathcal{Z})\}\cup\{\lambda>0:\cos(l_{e}\sqrt{\lambda})=1\}.$
\end{center} The multiplicities of the eigenvalues are given by
\begin{equation}\notag
m(\lambda)=\left\{
\begin{aligned}
&N-n+1,\qquad\qquad \qquad\qquad ~~if~\lambda=0~and~\mathrm{\Gamma}~is~ bipartite,\\
&N-n,\qquad\qquad\qquad\qquad if~\lambda=0~and~\mathrm{\Gamma}~is~non\mathrm{-}bipartite,\\
&\mathrm{dimker}(\mathcal{Z}+\cos(\sqrt{\lambda}l_{e})I),~if~\sin(\sqrt{\lambda}l_{e})\neq0,\\
&N-n+2, \qquad\qquad\qquad~\qquad if~\cos(\sqrt{\lambda}l_{e})=-1 ,
\\&N-n+2,\qquad\qquad\qquad\qquad~if~\cos(\sqrt{\lambda}l_{e})=1~and ~\mathrm{\Gamma}~is~ bipartite,\\
&N-n , \qquad\qquad\qquad\qquad if~\cos(\sqrt{\lambda}l_{e})=1~ and ~\mathrm{\Gamma}~ is ~non\mathrm{-}bipartite.
\end{aligned}
\right.
\end{equation}
\end{theorem}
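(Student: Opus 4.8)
The plan is to mirror the proof of the preceding theorem for $L^{st}(\mathrm{\Gamma})$, exploiting the duality in which the Dirichlet data $\Phi=U(0)$ and the Neumann data $\Psi=U'(0)$ exchange roles. Throughout I would use the explicit solution \eqref{3.3}, the anti-standard representation \eqref{3.2} (so that $\Psi=\psi e^{*}\cdot\mathcal{A}$ and $U(0)e=0$), and the reflection identities $U(l_{e})=U^{*}(0)=\Phi^{*}$ and $U'(l_{e})=-U'(0)^{*}=-\Psi^{*}$. I would also record that every $U(x)$ lies entrywise in $\mathcal{M}(\mathrm{\Gamma})$, so $\Phi,\Psi\in\mathcal{M}(\mathrm{\Gamma})$ automatically; this is what lets the symmetry classes $\mathcal{M}^{\pm}(\mathrm{\Gamma})$ appear.

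The generic case is $\sin(\sqrt{\lambda}l_{e})\neq0$. Here I would evaluate $U'(x)=-\sqrt{\lambda}\sin(\sqrt{\lambda}x)\Phi+\cos(\sqrt{\lambda}x)\Psi$ at $x=l_{e}$ and equate with $-\Psi^{*}$ to solve
\begin{equation}\notag
\Phi=\frac{1}{\sqrt{\lambda}\sin(\sqrt{\lambda}l_{e})}\bigl(\cos(\sqrt{\lambda}l_{e})\Psi+\Psi^{*}\bigr)=\frac{1}{\sqrt{\lambda}\sin(\sqrt{\lambda}l_{e})}\bigl(\cos(\sqrt{\lambda}l_{e})\psi e^{*}+e\psi^{*}\bigr)\cdot\mathcal{A}.
\end{equation}
Imposing the remaining condition $U(0)e=\Phi e=0$ and reducing with the Hadamard identity $((P\cdot Q)e)_{i}=(PQ^{*})_{ii}$ (exactly as in the previous proof) collapses this to $\cos(\sqrt{\lambda}l_{e})\mathrm{Diag}(\mathcal{A}e)\psi+\mathcal{A}\psi=0$, i.e. $\mathcal{Z}\psi=-\cos(\sqrt{\lambda}l_{e})\psi$. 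Since $\psi\mapsto\Psi$ is injective on a connected graph, this yields the spectral condition $-\cos(\sqrt{\lambda}l_{e})\in\sigma(\mathcal{Z})$ with multiplicity $\dim\ker(\mathcal{Z}+\cos(\sqrt{\lambda}l_{e})I)$.

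For $\sin(\sqrt{\lambda}l_{e})=0$ I would split into $\cos=\pm1$, use the reflection identities to read off the symmetry type of $\Phi$ and $\Psi$, and note that the constraints then decouple, so the multiplicity is the sum of a vanishing-$\Psi$ dimension and a vanishing-$\Phi$ dimension. When $\cos(\sqrt{\lambda}l_{e})=-1$ one gets $\Phi^{*}=-\Phi$ and $\Psi^{*}=\Psi$: the $\Phi$-block is $\{\Phi\in\mathcal{M}(\mathrm{\Gamma}):\Phi^{*}=-\Phi,\ \Phi e=0\}=\mathcal{M}^{-}(\mathrm{\Gamma})$ of dimension $N-n+1$, while symmetry of the structured $\Psi=\psi e^{*}\cdot\mathcal{A}$ forces $\psi$ constant (one dimension), giving $N-n+2$. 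When $\cos(\sqrt{\lambda}l_{e})=1$ one gets $\Phi^{*}=\Phi$ and $\Psi^{*}=-\Psi$: the $\Phi$-block is now $\mathcal{M}^{+}(\mathrm{\Gamma})$ of dimension $N-n+1$ (bipartite) or $N-n$ (non-bipartite), while antisymmetry of the structured $\Psi$ forces $\psi_{i}=-\psi_{j}$ along every edge, which admits a one-dimensional signing solution exactly when $\mathrm{\Gamma}$ is bipartite and only $\psi=0$ otherwise; summing gives $N-n+2$ (bipartite) and $N-n$ (non-bipartite), as claimed.

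The delicate case, and the one I expect to be the main obstacle, is $\lambda=0$. Here $U(x)=\Phi+l_{e}^{-1}(\Phi^{*}-\Phi)x$ with $\Psi=l_{e}^{-1}(\Phi^{*}-\Phi)$ automatically antisymmetric, so $\Phi$ is the only free parameter and the constraints no longer decouple: writing $\Phi=\Phi_{s}+\Phi_{a}$ in symmetric and antisymmetric parts, derivative-continuity $\Psi=\psi e^{*}\cdot\mathcal{A}$ forces $\Phi_{a}$ into the edge-signing form ($\Phi_{a}=0$ if $\mathrm{\Gamma}$ is non-bipartite, a one-dimensional space $\mathbb{R}\Phi_{a}^{0}$ if bipartite), while $\Phi_{s}$ ranges freely over the symmetric part of $\mathcal{M}(\mathrm{\Gamma})$, all subject to the single coupling condition $\Phi e=0$. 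In the non-bipartite case this reduces at once to $\Phi\in\mathcal{M}^{+}(\mathrm{\Gamma})$, of dimension $N-n$. In the bipartite case I would instead compute the kernel of the map $(\Phi_{s},t)\mapsto\Phi_{s}e+t\,\Phi_{a}^{0}e$ on the $(N+1)$-dimensional parameter space; the crux is a transversality check, namely that $\Phi_{a}^{0}e$ avoids the image of $\Phi_{s}\mapsto\Phi_{s}e$. Since that image is the hyperplane $s^{\perp}$ orthogonal to the bipartite signing vector $s$ and $\langle\Phi_{a}^{0}e,\,s\rangle=\sum_{i}d_{v_{i}}=2N\neq0$, the map has full rank $n$, giving multiplicity $N-n+1$. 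This transversality computation is the one genuinely new ingredient beyond transcribing the standard-case argument.
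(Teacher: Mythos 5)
Your proposal is correct, and for the generic case $\sin(\sqrt{\lambda}l_{e})\neq0$ and the two cases $\cos(\sqrt{\lambda}l_{e})=\pm1$ it follows essentially the same route as the paper: solve the reflection identity for $\Phi$ in terms of $\Psi$, reduce $\Phi e=0$ via the Hadamard identity to $\mathcal{Z}\psi=-\cos(\sqrt{\lambda}l_{e})\psi$, and in the degenerate cases read off the symmetry types so that the solution space splits into a vanishing-$\Phi$ block (structured $\Psi$, forcing $\psi$ constant or a bipartite signing) and a vanishing-$\Psi$ block isomorphic to $\mathcal{M}^{\mp}(\mathrm{\Gamma})$. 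Where you genuinely diverge is the case $\lambda=0$, which you correctly identify as the delicate one. The paper handles it analytically: integration by parts $0=\sum_{j}\int u_{j}''u_{j}=\sum_{j}[u_{j}'u_{j}]_{0}^{l_{e}}-\sum_{j}\int (u_{j}')^{2}$, with the boundary term annihilated by the anti-standard conditions, forces $u_{j}'\equiv0$; the eigenfunctions are then edgewise constant, so the space is immediately $\{\Phi:\Phi^{*}=\Phi,\ \Phi e=0\}=\mathcal{M}^{+}(\mathrm{\Gamma})$, giving $N-n+1$ or $N-n$ at once. You instead stay inside the finite-dimensional matrix framework: decompose $\Phi=\Phi_{s}+\Phi_{a}$, observe that derivative continuity confines $\Phi_{a}$ to the signing line $\mathbb{R}(se^{*}\cdot\mathcal{A})$ (trivial if non-bipartite), and in the bipartite case run the rank computation $\langle\Phi_{a}^{0}e,s\rangle=\sum_{i}d_{v_{i}}=2N\neq0$ against the image $s^{\perp}$ of $\Phi_{s}\mapsto\Phi_{s}e$, which forces $t=0$ and yields dimension $N-n+1$. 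Both arguments are sound and give the same count (your transversality step is exactly the algebraic shadow of the paper's conclusion $u'\equiv0$). The paper's energy argument is shorter and does not actually use equilaterality, so it transfers verbatim to inequilateral graphs; your argument is longer but purely linear-algebraic, self-contained within the $\mathcal{M}(\mathrm{\Gamma})$ formalism, and makes completely explicit where bipartiteness enters and why the linear-in-$x$ solutions are excluded.
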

\begin{proof}
We consider the first case $\lambda=0$. From \eqref{3.3} we know that all solutions of \eqref{2.2} with
$\lambda=0$, and the respective derivatives are obtained by
$$U(x)=\Phi+\dfrac{\Phi^{*}-\Phi}{l_{e}}x,~~U'(x)=\dfrac{\Phi^{*}-\Phi}{l_{e}}.$$
 Using integration by parts, we obtain
 \begin{equation}\label{3.4}
\begin{aligned}
0=\sum\limits_{j=1}^{N}\int_{0}^{l_{e}}u_{j}''u_{j}\mathrm{d}x_{j}=\sum\limits_{j=1}^{N}[u_{j}'u_{j}]_{0}^{l_{e}}-\sum\limits_{j=1}^{N}\int_{0}^{l_{e}}u_{j}'^{2}\mathrm{d}x_{j}.
\end{aligned}
\end{equation}
Simplify the first term on the right side of the equation as follows
\begin{align*}
 \sum\limits_{j=1}^{N}[u_{j}'u_{j}]_{0}^{l_{e}}&=\sum\limits_{j=1}^{N}u_{j}'(l_{e})u_{j}(l_{e})
-\sum\limits_{j=1}^{N}u_{j}'(0)u_{j}(0) \\
&=\sum_{j=1}^{N}\sum\limits_{v_{i}\in e_{j}}d_{ij}u_{j}'(v_{i})u_{j}(v_{i})\\
 &=\sum\limits_{i=1}^{n}\sum\limits_{j=1}^{d_{v_{i}}}d_{ij}u_{j}'(v_{i})u_{j}(v_{i}),
\end{align*}
 where $d_{v_{i}}$ denotes the degree of the vertex $v_{i}$. Considering $d_{ij}u_{j}'(v_{i})=d_{ik}u_{k}'(v_{i})$~
at fixed vertex $v_{i}$,~we can write $u'(v_{i}):=d_{ij}u'_{j}(v_{i})$. Therefore we have
\begin{equation}
\begin{aligned}
\sum\limits_{j=1}^{N}[u_{j}'u_{j}]_{0}^{l_{e}}=\sum\limits_{i=1}^{n}\sum\limits_{j=1}^{d_{v_{i}}}d_{ij}u_{j}'(v_{i})u_{j}(v_{i})=\sum\limits_{i=1}^{n}u'(v_{i})\sum\limits_{j=1}^{d_{v_{i}}}u_{j}(v_{i})=0.
\end{aligned}
\end{equation}
Then \eqref{3.4} can be written as
\begin{equation}
\begin{aligned}
0=\sum\limits_{j=1}^{N}\int_{0}^{l_{e}}u_{j}''u_{j}\mathrm{d}x_{j}=
-\sum\limits_{j=1}^{N}\int_{0}^{l_{e}}u_{j}'^{2}\mathrm{d}x_{j}\leq0.
\end{aligned}
\end{equation}
Thus $u'_{j}\equiv0$. We have $\Phi=\Phi^{*}$, and according to $\Phi e=0$, this solution space is isomorphic to $\mathcal{M}^{+}$.
Hence the multiplicity of $\lambda=0$ is $N-n+1$ if $\mathrm{\Gamma}$ is bipartite, the multiplicity of $\lambda=0$ is $N-n$ if $\mathrm{\Gamma}$ is non-bipartite.

We consider the second case $\lambda>0$. From \eqref{3.3} we know that all solutions of \eqref{2.2} and the respective derivatives are obtained by
$$U(x)=\cos(\sqrt{\lambda }x)\Phi+\dfrac{\sin(\sqrt{\lambda }x)}{\sqrt{\lambda }}\Psi,\qquad U'(x)=-\sqrt{\lambda }\sin(\sqrt{\lambda }x)\Phi+\cos(\sqrt{\lambda }x)\Psi.$$

We assume that $\sin(\sqrt{\lambda}l_{e})\neq0$. Using $U'(l_{e})=-U'(0)^{*}=-\Psi^{*}$, we obtain
$$-\Psi^{*}=-\sqrt{\lambda }\sin(\sqrt{\lambda }l_{e})\Phi+\cos(\sqrt{\lambda }l_{e})\Psi$$
$$\Leftrightarrow\Phi=\dfrac{1}{\sqrt{\lambda }\sin(\sqrt{\lambda }l_{e})}(\cos(\sqrt{\lambda }l_{e})\Psi+\Psi^{*}).\qquad $$
Due to $\Psi=U'(0)=\psi e^{*}\cdot \mathcal{A}$ and $\Phi e=U(0)e=0$, we have
$$(\mathcal{A}\cdot e\psi^{*})e=-\cos(\sqrt{\lambda }l_{e})(\mathcal{A}\cdot \psi e^{*})e$$
~$$\qquad\qquad \mathcal{Z}\psi=-\cos(\sqrt{\lambda }l_{e})\psi.\qquad \qquad\qquad\quad$$
Then multiplicity of the eigenvalue $\lambda$ is dimker$(\mathcal{Z}+\cos(\sqrt{\lambda}l_{e})I)$ when $\sin(\sqrt{\lambda}l_{e})\neq0$.

For $\sin(\sqrt{\lambda}l_{e})=0$, we will distinguish
the cases $\cos(\sqrt{\lambda }l_{e})=1$ and $\cos(\sqrt{\lambda }l_{e})=-1$.

Case 1:~We will begin with the $\cos(\sqrt{\lambda }l_{e})=-1$,
since $\Psi^{*}=\Psi=\psi e^{*}\cdot \mathcal{A}$, the solution space with vanishing $\Phi$ of \eqref{3.2} is one dimension. On the other hand, we have $\Phi^{*}=-\Phi$ by using $U(l_{e})=U^{*}(0)=\Phi^{*}$. Then according to $\Phi e=U(0)e=0$, the solution space with vanishing $\Psi$ of \eqref{3.2} is isomorphic to $\mathcal{M}^{-}$ with $\dim(\mathcal{M}^{-})=N-n+1$. Hence the multiplicity of $\lambda$ is $N-n+2$.

Case 2:~When $\cos(\sqrt{\lambda }l_{e})=1$, we have $\Phi^{*}=\Phi$ by using $U(l_{e})=U(0)^{*}=\Phi^{*}$. Then according to $\Phi e=U(0)e=0$, the solution space with vanishing $\Psi$ of \eqref{3.2} is isomorphic to $\mathcal{M}^{+}$ with $\dim(\mathcal{M}^{+})$ which is either $N-n+1$ or $N-n$ depending on whether $\mathrm{\Gamma}$ is bipartite or not. On the other hand, since $\Psi=\psi e^{*}\cdot \mathcal{A}$, the solution space with vanishing $\Phi$ is one dimension. But since $U'(l_{e})=-U'(0)^{*}=-\Psi^{*}$, we see $\Psi=-\Psi^{*}$ which is possible for non-trivial $\Psi$ if $\mathrm{\Gamma}$ is bipartite, we cannot obtain $\Psi=-\Psi^{*}$ if $\mathrm{\Gamma}$ is non-bipartite. Hence the multiplicity of $\lambda$ is $N-n+2$ if $\mathrm{\Gamma}$ is bipartite, the multiplicity of $\lambda$ is $N-n$ if $\mathrm{\Gamma}$ is non-bipartite. We have proven the theorem.
\end{proof}

\section{The relation between standard and anti-standard eigenvalues on equilateral non-bipartite graphs}

The relation between the eigenvalues of $L^{st}(\mathrm{\Gamma})$ and $L^{a/st}(\mathrm{\Gamma})$ on equilateral  bipartite graphs has achieved some good results, please refer to \cite{7} and \cite{6} for details. However, it has remained unnoticed that there is a relation between the eigenvalues of $L^{st}(\mathrm{\Gamma})$ and $L^{a/st}(\mathrm{\Gamma})$ on equilateral non-bipartite graphs.~The comparison between the eigenvalues of Laplace operators with the standard conditions and the anti-standard conditions on equilateral non-bipartite graphs is as follows.

We assume that the graph $\mathrm{\Gamma}$ is an equilateral non-bipartite graph with edge length $l_{e}$.~The number 1 must be the eigenvalue of the corresponding transition matrix $\mathcal{Z}$, and let the eigenvalues of $\mathcal{Z}$ be $\mu_{1},\mu_{2},\cdots,\mu_{n-1},\mu_{n}$,~then we have
 $$1=\mu_{1}>\mu_{2}\geq\cdots\mu_{n-1}\geq\mu_{n}>-1.$$
 Define the~$\mathbb{N}_{0}^{(m)}$~and the~$\mathbb{N}^{(m)}$~as follows
\begin{center}
  ~$\mathbb{N}_{0}^{(m)}=\{\underbrace{0,\cdots, 0}_{m},\underbrace{1,\cdots, 1}_{m},2\cdots\}$,\qquad $\mathbb{N}^{(m)}=\{\underbrace{1,\cdots,1}_{m},\underbrace{2,\cdots, 2}_{m},3\cdots\}.$
\end{center}
The $l_{e}^{2}$ times eigenvalues of $L^{st}(\mathrm{\Gamma})$ and $L^{a/st}(\mathrm{\Gamma})$ are listed in Table~1,
\begin{center}
\begin{tabular}{c|cccc}
 & standard &   anti-standard \\
 \hline
   $\lambda=0$ &  $\{0\}$ &         $\{0\}$~or~$\emptyset$\\
   $ \cos(l_{e}\sqrt{\lambda})=1$ &   $\{(2k\pi)^{2}:k\in \mathbb{N}^{(N-n+2)}\}$ &         $\{(2k\pi)^{2}:k\in \mathbb{N}^{(N-n)}\}$ \\
    $ \sin(l_{e}\sqrt{\lambda})\neq0$ &   $\{(2k\pi\pm\arccos(\mu_{m}))^{2}:$ &
                                       $\{((2k+1)\pi\pm\arccos(\mu_{m}))^{2}:$\\
                                    &     $  ~k\in \mathbb{N}_{0}, m\in\{2,\cdots,n\}\}$ &     $~k\in \mathbb{N}_{0},m\in\{2,\cdots,n\}\}$\\
    $ \cos(l_{e}\sqrt{\lambda})=-1$ &    $ \{(2k+1)^{2}\pi^{2}:k\in \mathbb{N}_{0}^{(N-n)}\}$ &        $ \{(2k+1)^{2}\pi^{2}:k\in \mathbb{N}_{0}^{(N-n+2)}\}$. \\
  \end{tabular}
  $$\textbf{ Table~1~}$$
\end{center}
\begin{theorem}\label{t1}
Let graph $\mathrm{\Gamma}$ be an equilateral non-bipartite graph with the transition matrix $\mathcal{Z}$, then we have
\begin{equation}\label{4.20}
\begin{aligned}
\lambda_{k+1}(L^{st}(\mathrm{\Gamma}))\geq \lambda_{k+N-n}(L^{a/st}(\mathrm{\Gamma})),\quad k\in \mathbb{N}
\end{aligned}
\end{equation}
hold if and only if
\begin{equation}\label{4.11}
\begin{aligned}
\mu_{2}\leq -\mu_{n},\mu_{3}\leq -\mu_{n-1},\cdots,\mu_{n-1}\leq -\mu_{3},\mu_{n}\leq -\mu_{2},\qquad
\end{aligned}
\end{equation}
 \begin{equation}\label{4.2}
\begin{aligned}
\mu_{2}\geq -\mu_{n-2},\mu_{3}\geq -\mu_{n-3},\cdots,\mu_{n-3}\geq -\mu_{3},\mu_{n-2}\geq -\mu_{2},
\end{aligned}
\end{equation}
where $\mu_{1},\mu_{2},\cdots,\mu_{n-1},\mu_{n}$ are the eigenvalues of the $\mathcal{Z}$.
\end{theorem}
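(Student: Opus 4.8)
The plan is to reduce the eigenvalue comparison \eqref{4.20} to a single monotonicity statement about a counting function, and then to read off the two systems \eqref{4.11} and \eqref{4.2} from the behaviour of that function on the two half-periods $(0,\pi)$ and $(\pi,2\pi)$. First I would pass from $\lambda$ to the variable $\theta:=l_{e}\sqrt{\lambda}\ge 0$, which is monotone in $\lambda$, so that ordering the eigenvalues of $L^{st}(\mathrm{\Gamma})$ and $L^{a/st}(\mathrm{\Gamma})$ is the same as ordering the corresponding $\theta$-values. Writing $A(\theta)$ and $B(\theta)$ for the number of $\theta$-values (with multiplicity) of $L^{st}(\mathrm{\Gamma})$ and $L^{a/st}(\mathrm{\Gamma})$ that are $\le\theta$, a standard counting-function argument shows that
\begin{equation}\notag
\lambda_{k+1}(L^{st}(\mathrm{\Gamma}))\ge\lambda_{k+N-n}(L^{a/st}(\mathrm{\Gamma}))\quad\text{for all }k\in\mathbb{N}
\end{equation}
is equivalent to $D(\theta):=B(\theta)-A(\theta)\ge N-n-1$ holding for every $\theta\ge 0$. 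Indeed a failure $\theta^{st}_{i}<\theta^{a/st}_{i+(N-n-1)}$ forces some $\theta$ with $A(\theta)\ge i$ and $B(\theta)\le i+(N-n-1)-1$, i.e. $D(\theta)\le N-n-2$, and conversely; the boundary equality $D(0)=N-n-1$ matches the fact that the claim is only asked for $k\ge 1$. I must keep the direction of this inequality straight, since it is $B-A$ (not $A-B$) that is bounded below.

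Next I would extract the jump data of $A$ and $B$ from Table~1. Setting $p:=N-n$ and $\alpha_{m}:=\arccos(\mu_{m})\in(0,\pi)$ for $m=2,\dots,n$ (so $0<\alpha_{2}\le\cdots\le\alpha_{n}<\pi$), the standard $\theta$-values in one period are $0$ (once), the $\alpha_{m}$ in $(0,\pi)$, $\pi$ (multiplicity $p$), the $2\pi-\alpha_{m}$ in $(\pi,2\pi)$, and $2\pi$ (multiplicity $p+2$); the anti-standard ones are $0$ (multiplicity $p$), the $\pi-\alpha_{m}$ in $(0,\pi)$, $\pi$ (multiplicity $p+2$), the $\pi+\alpha_{m}$ in $(\pi,2\pi)$, and $2\pi$ (multiplicity $p$). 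A direct count gives $D(0)=D(2\pi)=p-1$ and $D(\pi)=p+1$, and since both $A$ and $B$ increase by $2N$ over each interval of length $2\pi$, the function $D$ is $2\pi$-periodic; hence it suffices to verify $D\ge p-1$ on $[0,2\pi]$, where the interior behaviour is governed by the interleaving of the standard jumps (which lower $D$) and the anti-standard jumps (which raise it).

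On $(0,\pi)$ one has $D(\theta)=p-1+\#\{m:\alpha_{m}\ge\pi-\theta\}-\#\{m:\alpha_{m}\le\theta\}$; writing $t=\cos\theta$ and using that $\arccos$ is decreasing, the requirement $D\ge p-1$ becomes
\begin{equation}\notag
\#\{m:\mu_{m}\le -t\}\ge\#\{m:\mu_{m}\ge t\}\quad\text{for all }t\in(-1,1).
\end{equation}
A short sorting lemma---pairing the $j$-th largest of $\mu_{2},\dots,\mu_{n}$ with the $j$-th smallest---shows this holds for all $t$ exactly when $\mu_{m}+\mu_{n+2-m}\le 0$ for $m=2,\dots,n$, which is \eqref{4.11}. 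On $(\pi,2\pi)$ the same computation, now starting from $D(\pi)=p+1$, gives $D(\theta)=p+1+\#\{m:\alpha_{m}\le\theta-\pi\}-\#\{m:\alpha_{m}\ge 2\pi-\theta\}$, so with $s=\cos(\theta-\pi)$ the condition $D\ge p-1$ reads $\#\{m:\mu_{m}\le -s\}\le\#\{m:\mu_{m}\ge s\}+2$ for all $s\in(-1,1)$; the extra slack $2$ comes precisely from the jump of size $2$ of $D$ at $\theta=\pi$, which reflects that at $\cos(\sqrt{\lambda}l_{e})=-1$ the standard and anti-standard multiplicities differ by $2$. The companion sorting lemma turns this into $\mu_{m}+\mu_{n-m}\ge 0$ for $m=2,\dots,n-2$, which is \eqref{4.2}.

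Combining the three steps, $D\ge p-1$ on $[0,2\pi]$ (hence everywhere) is equivalent to \eqref{4.11} together with \eqref{4.2}, which gives sufficiency. For necessity I would argue contrapositively: if \eqref{4.11} fails then $D$ dips below $p-1$ at some $\theta^{*}\in(0,\pi)$, and if \eqref{4.2} fails it does so at some $\theta^{*}\in(\pi,2\pi)$; in either case at least one standard jump has already occurred, so $i:=A(\theta^{*})\ge 2$ yields a genuine index with $\theta^{st}_{i}<\theta^{a/st}_{i+N-n-1}$, violating \eqref{4.20} for $k=i-1\ge 1$. I expect the main obstacle to be the bookkeeping that produces the asymmetry between the two conditions: the boundary values $D(0)=p-1$ and $D(\pi)=p+1$ must be computed exactly (so the $(0,\pi)$ constraint is tight while the $(\pi,2\pi)$ constraint carries a slack of $2$), and the two sorting lemmas must be stated with the correct index pairings so that they reproduce \eqref{4.11} and \eqref{4.2} verbatim.
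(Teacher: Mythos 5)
Your proposal is correct, and it reaches the paper's conditions by a genuinely different route. The paper works directly with the two sorted spectra: using the Section~3 results it writes out $\sigma(L^{st}(\mathrm{\Gamma}))$ and $\sigma(L^{a/st}(\mathrm{\Gamma}))$ in increasing order within one period, pairs the $(k+1)$-st standard eigenvalue with the $(k+N-n)$-th anti-standard one, observes that the only non-automatic comparisons are $\arccos(\mu_{m})\geq \pi-\arccos(\mu_{n+2-m})$ on the first half-period and $2\pi-\arccos(\mu_{n-m})\geq \pi+\arccos(\mu_{m})$ on the second, and converts each into \eqref{4.11} and \eqref{4.2} by applying $\cos$ on $(0,\pi]$ and $(\pi,2\pi]$ where it is monotone; $2\pi$-periodicity covers all $k$, and the converse is dispatched in one sentence by reversing these steps. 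You instead encode the whole comparison in the counting functions $A$, $B$ and their difference $D$, prove the equivalence of \eqref{4.20} with the uniform bound $D\geq N-n-1$, compute $D(0)=p-1$, $D(\pi)=p+1$, $D(2\pi)=p-1$, and then characterize the bound on each half-period by two sorting (rearrangement) lemmas; I checked both lemmas and the equivalence (including the subtle point that $D(\theta)\leq N-n-2$ forces $A(\theta)\geq 2$, since $B\geq N-n$ everywhere), and they are right, reproducing \eqref{4.11} and \eqref{4.2} exactly. Both arguments consume the same inputs — the Section~3 spectra, periodicity, and cosine monotonicity on the two half-periods — so the mathematical core is shared; what your formalization buys is a genuinely two-sided equivalence (both directions of the iff fall out of a chain of equivalences rather than an asserted ``conversely''), automatic bookkeeping of the index shift $N-n$, of multiplicities, and of the slack of $2$ at $\theta=\pi$, which in the paper appears only implicitly as the two automatic comparisons against the eigenvalue $(\pi/l_{e})^{2}$. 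What the paper's direct pairing buys is brevity and concreteness: \eqref{4.11} and \eqref{4.2} are read off immediately from the ordered lists, whereas in your write-up the counting-function equivalence and the two sorting lemmas still have to be proved in full — they are routine, but that is exactly where the suppressed bookkeeping reappears.
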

\begin{proof}~We can assume that each edge has length $l_{e}$.~Because the arccosine function~$y=\arccos(x)$: [-1,1]$\rightarrow$ $[0,\pi]$, and $y$ is monotonically decreasing in interval [-1,1], the spectra of $L^{st}(\mathrm{\Gamma})$ and $L^{a/st}(\mathrm{\Gamma})$ are
\begin{align*}
\sigma(L^{st}(\mathrm{\Gamma}))=&\Biggr\{0,\left(\frac{\arccos(\mu_{2})}{l_{e}}\right)^{2},\left(\frac{\arccos(\mu_{3})}{l_{e}}\right)^{2},\cdots,\left(\frac{\arccos(\mu_{n})}{l_{e}}\right)^{2},\\
&\underbrace{\left(\frac{\pi}{l_{e}}\right)^{2},\cdots,\left(\frac{\pi}{l_{e}}\right)^{2}}_{N-n},
\left(\frac{2\pi-\arccos(\mu_{n})}{l_{e}}\right)^{2}, \cdots,\left(\frac{2\pi-\arccos(\mu_{3})}{l_{e}}\right)^{2}, \\& \left(\frac{2\pi-\arccos(\mu_{2})}{l_{e}}\right)^{2},
\underbrace{\left(\frac{2\pi}{l_{e}}\right)^{2},\cdots,\left(\frac{2\pi}{l_{e}}\right)^{2}}_{N-n+2},\cdots\Biggr\},\\ &\lambda_{k}\left(L^{st}(\mathrm{\Gamma})\right)\leq \lambda_{k+1}\left(L^{st}(\mathrm{\Gamma})\right),~k\in\mathbb{N}.
\end{align*}
\begin{align*}
\sigma(L^{a/st}(\mathrm{\Gamma}))=&\Biggr\{\underbrace{0,\cdots,0}_{N-n},\left(\dfrac{\pi-\arccos(\mu_{n})}{l_{e}}\right)^{2},\cdots,\left(\dfrac{\pi-\arccos(\mu_{3})}{l_{e}}\right)^{2},
\qquad\qquad\qquad\qquad\qquad\qquad\qquad\\
& \left(\dfrac{\pi-\arccos(\mu_{2})}{l_{e}}\right)^{2}, \underbrace{\left(\dfrac{\pi}{l_{e}}\right)^{2},\cdots,\left(\dfrac{\pi}{l_{e}}\right)^{2}}_{N-n+2},
\left(\dfrac{\pi+\arccos(\mu_{2})}{l_{e}}\right)^{2},\\
&\left(\dfrac{\pi+\arccos(\mu_{n})}{l_{e}}\right)^{2}, \underbrace{\left(\dfrac{2\pi}{l_{e}}\right)^{2},\cdots,\left(\dfrac{2\pi}{l_{e}}\right)^{2}}_{N-n},\cdots \Biggr\},\\
&\lambda_{k}(L^{a/st}\left(\mathrm{\Gamma})\right)\leq \lambda_{k+1}(L^{a/st}\left(\mathrm{\Gamma})\right),~~k\in\mathbb{N}.
\end{align*}

We now consider the comparison between the eigenvalues of $L^{st}(\mathrm{\Gamma})$ and~$L^{a/st}(\mathrm{\Gamma})$ in interval $\left(0,(\frac{2\pi}{l_{e}})^{2}\right]$.~The number of the eigenvalues of $L^{st}(\mathrm{\Gamma})$ and $L^{a/st}(\mathrm{\Gamma})$ is same in interval $\left(0,(\frac{2\pi}{l_{e}})^{2}\right]$.~We consider the following inequality:
 \begin{equation}\label{4.3}
\begin{aligned}
\arccos(\mu_{2})\geq(\pi-\arccos(\mu_{n})).
\end{aligned}
\end{equation}
The function $\cos(x)$ acts on both sides of inequality (\ref{4.3}), and since cosine function $y=\cos(x)$ is monotonically decreasing in interval $(0,\pi]$, we have
$$\mu_{2}\leq -\mu_{n}.$$
For the following inequalities:
$$\arccos(\mu_{3})\geq\left(\pi-\arccos(\mu_{n-1})\right),$$
$$\cdots,$$
$$\arccos(\mu_{n-1})\geq\left(\pi-\arccos(\mu_{3})\right),$$
$$\arccos(\mu_{n})\geq(\pi-\arccos(\mu_{2})),\quad$$
repeating our analysis, we have
$$\mu_{3}\leq -\mu_{n-1},~\cdots,~\mu_{n-1}\leq -\mu_{3},~\mu_{n}\leq -\mu_{2}.$$

Since the arccosine function $y=\arccos(x)$: [-1,1]$\rightarrow$ $[0,\pi]$, then we have
$$(2\pi-\arccos(\mu_{n}))\geq\pi,~(2\pi-\arccos(\mu_{n-1}))\geq\pi.$$

We analyze following inequality:
 \begin{equation}\label{4.4}
\begin{aligned}
(2\pi-\arccos(\mu_{n-2}))\geq(\pi+\arccos(\mu_{2})),
\end{aligned}
\end{equation}
The function $\cos(x)$ acts on both sides of inequality (\ref{4.4}), and since cosine function $y=\cos(x)$ is monotonically increasing in interval $(\pi,2\pi]$, we have
$$\mu_{n-2}\geq-\mu_{2}.$$
For the following inequalities:
$$(2\pi-\arccos(\mu_{n-3}))\geq(\pi+\arccos(\mu_{3})),$$$$~\cdots,~$$$$(2\pi-\arccos(\mu_{3}))\geq(\pi+\arccos(\mu_{n-3})),$$
$$~(2\pi-\arccos(\mu_{2}))\geq(\pi+\arccos(\mu_{n-2})),~$$
then we have
$$\mu_{n-3}\geq -\mu_{3},~\cdots,~\mu_{3}\geq -\mu_{n-3},~\mu_{2}\geq -\mu_{n-2}.$$

For the comparison between the eigenvalues of $L^{st}(\mathrm{\Gamma})$ and~$L^{a/st}(\mathrm{\Gamma})$ in interval $\left((\frac{2k\pi}{l_{e}})^{2},(\frac{2(k+1)\pi}{l_{e}})^{2}\right],~k\in \mathbb{N}$. Because the number of eigenvalues of $L^{st}(\mathrm{\Gamma})$ and $L^{a/st}(\mathrm{\Gamma})$ is same in interval $\left((\frac{2k\pi}{l_{e}})^{2},(\frac{2(k+1)\pi}{l_{e}})^{2}\right],$~$k\in \mathbb{N}$, and according to $\cos(2k\pi+\sqrt{\lambda})=\cos(\sqrt{\lambda}),~\sqrt{\lambda}\in(0,\frac{2\pi}{l_{e}}]$, the comparison between the eigenvalues of $L^{st}(\mathrm{\Gamma})$ and $L^{a/st}(\mathrm{\Gamma})$ in interval $\left((\frac{2k\pi}{l_{e}})^{2},(\frac{2(k+1)\pi}{l_{e}})^{2}\right],~k\in \mathbb{N}$ can get the same results as in interval $\left(0,(\frac{2\pi}{l_{e}})^{2}\right]$. Conversely, based on (\ref{4.11}) and (\ref{4.2}), we get (\ref{4.20}) by using the properties of the arccosine function.
We have proven the theorem.\end{proof}

For a given finite non-bipartite graph, the eigenvalues of the corresponding transition matrix can be easily calculated. Thus we can easily judge whether the eigenvalues of the Laplace operators satisfy (\ref{4.20}) by judging whether the eigenvalues of transition matrix satisfy (\ref{4.11}) and (\ref{4.2}). To provide one of the simplest examples, consider that $\mathrm{\Gamma}$ is a regular pentagon with edge length 1 which is a cycle with 5 edges. In this case, all eigenvalues of $\mathcal{Z}$ are $\mu_{1}=1,\mu_{2}=\mu_{3}=\frac{\sqrt{5}}{4}-\frac{1}{4},\mu_{4}=\mu_{5}=-\frac{\sqrt{5}}{4}-\frac{1}{4}$. The values of arccosine function of $\mu_{2}$, $\mu_{3}$, $\mu_{4}$ and $\mu_{5}$ are not clear. It is not easy for us to directly compare the size between the eigenvalues of $L^{st}(\mathrm{\Gamma})$ and~$L^{a/st}(\mathrm{\Gamma})$, but the eigenvalues of $\mathcal{Z}$ satisfy $\mu_{2}\leq -\mu_{5},\mu_{3}\leq -\mu_{4},\mu_{2}\geq -\mu_{3},\mu_{3}\geq -\mu_{2}$, we have
\begin{equation}\notag
\begin{aligned}
\lambda_{k+1}(L^{st}(\mathrm{\Gamma}))\geq \lambda_{k}(L^{a/st}(\mathrm{\Gamma})),\quad k\in \mathbb{N}.
\end{aligned}
\end{equation}

We next consider the changing rules of the anti-standard eigenvalues when vertices of the metric graphs are increased.
Let $\widetilde{\mathrm{\Gamma}}$ be a metric graph obtained by adding some vertices  to each edge of the metric graph $\mathrm{\Gamma}$, then we have
\begin{center}
 $ \lambda_{k}(L^{st}(\mathrm{\Gamma}))=\lambda_{k}(L^{st}(\widetilde{\mathrm{\Gamma}})),\quad k\in \mathbb{N}.$
\end{center}
\begin{corollary}\label{4.1}
Assume that graph $\mathrm{\Gamma}$ is an odd equilateral cycle, and let $\widetilde{\mathrm{\Gamma}}$ be an equilateral bipartite graph obtained by adding some vertices in the middle of each edge of the graph $\mathrm{\Gamma}$. If the eigenvalues of the transition matrix $\mathcal{Z}$ satisfy \eqref{4.11} and \eqref{4.2}, then we have
$$ \lambda_{k+1}(L^{a/st}(\widetilde{\mathrm{\Gamma}}))\geq\lambda_{k}(L^{a/st}(\mathrm{\Gamma})),\quad k\in \mathbb{N}.$$
\end{corollary}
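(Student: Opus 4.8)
The plan is to reduce the inequality for the anti-standard spectrum of the subdivided graph $\widetilde{\Gamma}$ to the inequality for the standard spectrum of the odd cycle $\Gamma$, which is already controlled by the comparison theorem for equilateral non-bipartite graphs preceding this corollary. First I would record that, since $\Gamma$ is a cycle, it satisfies $N=n$, so the shift $N-n$ in \eqref{4.20} vanishes; as the hypotheses of that theorem are exactly \eqref{4.11} and \eqref{4.2}, which are assumed here, it yields
$$\lambda_{k+1}(L^{st}(\mathrm{\Gamma}))\geq\lambda_{k}(L^{a/st}(\mathrm{\Gamma})),\qquad k\in\mathbb{N}.$$

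Next I would invoke the two invariances that link $\Gamma$ and $\widetilde{\Gamma}$. On the one hand, $\widetilde{\Gamma}$ is obtained from $\Gamma$ by inserting degree-two vertices, and under the standard conditions such a vertex only enforces $C^{1}$-matching, which holds automatically; hence $L^{st}(\widetilde{\mathrm{\Gamma}})$ and $L^{st}(\mathrm{\Gamma})$ share the same ordered spectrum, $\lambda_{j}(L^{st}(\widetilde{\mathrm{\Gamma}}))=\lambda_{j}(L^{st}(\mathrm{\Gamma}))$ for every $j$, which is precisely the identity stated immediately before the corollary. On the other hand, $\widetilde{\Gamma}$ is an equilateral bipartite cycle, so by the cited result on bipartite graphs the positive eigenvalues of $L^{st}(\widetilde{\mathrm{\Gamma}})$ and $L^{a/st}(\widetilde{\mathrm{\Gamma}})$ coincide, counted with multiplicity.

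To promote the agreement of the positive spectra to an agreement of the full ordered lists at the index $k+1$, I would pin down the zero eigenvalue on $\widetilde{\Gamma}$. Being a bipartite cycle it has $\widetilde{N}=\widetilde{n}$, so the anti-standard multiplicity theorem of Section~3 gives that $0$ is an eigenvalue of $L^{a/st}(\widetilde{\mathrm{\Gamma}})$ of multiplicity $\widetilde{N}-\widetilde{n}+1=1$, which equals the multiplicity $1$ of $0$ for $L^{st}(\widetilde{\mathrm{\Gamma}})$. Thus both operators place their first positive eigenvalue at index $2$, and the coincidence of the positive spectra upgrades to $\lambda_{j}(L^{a/st}(\widetilde{\mathrm{\Gamma}}))=\lambda_{j}(L^{st}(\widetilde{\mathrm{\Gamma}}))$ for all $j$. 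Concatenating the three relations yields
$$\lambda_{k+1}(L^{a/st}(\widetilde{\mathrm{\Gamma}}))=\lambda_{k+1}(L^{st}(\widetilde{\mathrm{\Gamma}}))=\lambda_{k+1}(L^{st}(\mathrm{\Gamma}))\geq\lambda_{k}(L^{a/st}(\mathrm{\Gamma})),$$
which is the assertion.

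I expect the only delicate point to be this bookkeeping of the zero eigenvalue and the attendant index alignment. I must verify that $0$ has multiplicity exactly $1$ for both operators on $\widetilde{\Gamma}$, so that the index $k+1$ is not shifted when passing from $L^{a/st}(\widetilde{\mathrm{\Gamma}})$ to $L^{st}(\widetilde{\mathrm{\Gamma}})$, and that $0$ is not an eigenvalue of $L^{a/st}(\mathrm{\Gamma})$, i.e.\ has multiplicity $N-n=0$, so that the right-hand index is genuinely $k$ rather than a shifted one. All of these counts are read directly off the Section~3 multiplicity formulas specialized to $N=n$, so no new estimate is needed; the entire content of the argument lies in aligning the indices correctly.
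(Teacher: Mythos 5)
Your proof is correct and follows essentially the same route as the paper's: apply the comparison theorem \eqref{4.20} to $\mathrm{\Gamma}$ (noting $N=n$ for a cycle so the shift vanishes), use the invariance of the standard spectrum under subdivision, and use the equality of standard and anti-standard spectra on the bipartite graph $\widetilde{\mathrm{\Gamma}}$, then concatenate. Your explicit bookkeeping of the zero-eigenvalue multiplicities (via the Section~3 formulas with $\widetilde{N}=\widetilde{n}$ and $N=n$) is a refinement the paper leaves implicit when it asserts $\lambda_{k}(L^{st}(\widetilde{\mathrm{\Gamma}}))=\lambda_{k}(L^{a/st}(\widetilde{\mathrm{\Gamma}}))$ for all $k\in\mathbb{N}$.
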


\begin{proof}
For the graph $\mathrm{\Gamma}$ which is an equilateral non-bipartite graph, if the eigenvalues of the transition matrix $\mathcal{Z}$ satisfy (\ref{4.11}) and (\ref{4.2}), then we have
 $$\lambda_{k+1}(L^{st}(\mathrm{\Gamma}))\geq \lambda_{k}(L^{a/st}(\mathrm{\Gamma})),\quad k\in \mathbb{N}.$$
 After adding vertices, the graph $\widetilde{\mathrm{\Gamma}}$ is an equilateral bipartite graph, we have
 $$\lambda_{k}(L^{st}(\widetilde{\mathrm{\Gamma}}))=\lambda_{k}(L^{a/st}(\widetilde{\mathrm{\Gamma}})),\quad k\in \mathbb{N}.$$
The following relation between the eigenvalues of $L^{st}(\mathrm{\Gamma})$ and $L^{st}(\widetilde{\mathrm{\Gamma}})$
$$\lambda_{k}(L^{st}(\mathrm{\Gamma}))=\lambda_{k}(L^{st}(\widetilde{\mathrm{\Gamma}})),\quad k\in \mathbb{N}.$$Hence
$$ \lambda_{k+1}(L^{a/st}(\widetilde{\mathrm{\Gamma}}))\geq\lambda_{k}(L^{a/st}(\mathrm{\Gamma})),\quad k\in \mathbb{N}.$$
This completes the proof.
\end{proof}
\section{ Inequalities between standard and anti-standard eigenvalues on a special inequilateral non-bipartite graph}

The previous method is only valid for the calculation of the eigenvalues of Laplace operators on equilateral graphs.~Thus in this section, we will present a different approach which solves the eigenvalues of Laplace operators on inequilateral metric graphs.

Since the eigenvalues of $L^{st}(\mathrm{\Gamma})$ and $L^{a/st}(\mathrm{\Gamma})$ are always non-negative real numbers, we have $\lambda=k^{2}.$
We denote $2N\times2N$ global scattering matrix by  $S(k)$, and by $L$ denote $2N\times2N$ diagonal matrix $\mathrm{Diag}(l_{b})$, see \cite{5} for details.

\begin{lemma}\label{1.3}
Let $L(\mathrm{\Gamma})$ be a Laplace operator on graph $\mathrm{\Gamma}$ with the global scattering matrix, then
$k^{2}\in \mathbb{C}\backslash \{0\}$ is an eigenvalue of $L(\mathrm{\Gamma})$ with the multiplicity $m_{k^{2}}$ if and only if $k$ is a root of the secular equation \eqref{5.3} with the same multiplicity,
 \begin{equation}\label{5.3}
\begin{aligned}
\mathrm{det}(I-S(k)e^{ikL})=0.
\end{aligned}
\end{equation}
\end{lemma}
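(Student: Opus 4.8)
The plan is to set up a plane-wave representation of solutions on each edge and encode the vertex conditions and free propagation as linear algebra, reducing the eigenvalue problem to the secular determinant. First I would fix $k^{2}\neq 0$ and note that on each edge $e_{j}$, identified with $[0,l_{e_{j}}]$, the equation $-u_{j}''=k^{2}u_{j}$ has general solution spanned by $e^{ikx}$ and $e^{-ikx}$. Passing to the $2N$ directed edges (bonds), I would write the solution on each bond $b$ as $a_{b}e^{ikx_{b}}$, where $x_{b}$ is the arc-length coordinate measured from the tail of $b$, so that the entire solution is determined by the amplitude vector $\vec{a}=(a_{b})\in\mathbb{C}^{2N}$. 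An eigenfunction of $L(\mathrm{\Gamma})$ with eigenvalue $k^{2}$ then corresponds precisely to a choice of $\vec{a}$ for which the resulting function lies in the operator domain, that is, satisfies the vertex conditions.

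Next I would translate the vertex conditions into algebraic constraints on $\vec{a}$. At a vertex $v$ of degree $d_{v}$, the incoming amplitudes (those of bonds pointing into $v$, which after propagation along the bond carry the factor $e^{ikl_{b}}$) are mapped to the outgoing amplitudes by the vertex scattering matrix, which is fixed once we specify whether the standard or the anti-standard conditions are imposed. Assembling these vertex scattering matrices together with the propagation phases $e^{ikl_{b}}$ into the global scattering matrix $S(k)$ and the diagonal length matrix $L=\mathrm{Diag}(l_{b})$ exactly as in \cite{5}, the requirement that every outgoing amplitude be reproduced after one cycle of propagation followed by scattering becomes $\vec{a}=S(k)e^{ikL}\vec{a}$. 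Hence a nontrivial eigenfunction exists if and only if $(I-S(k)e^{ikL})\vec{a}=0$ admits a nonzero solution, which is equivalent to $\det(I-S(k)e^{ikL})=0$; this establishes the equivalence of the two conditions at the level of existence.

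Finally I would argue the equality of multiplicities. The geometric part is immediate: the map sending an eigenfunction to its amplitude vector $\vec{a}$ is a linear isomorphism from the eigenspace of $k^{2}$ onto $\ker(I-S(k)e^{ikL})$, so that $m_{k^{2}}=\dim\ker(I-S(k)e^{ikL})$. The harder part, and the main obstacle, is to show that this kernel dimension equals the order to which $k$ vanishes as a root of the entire function $k\mapsto\det(I-S(k)e^{ikL})$. Here I would use that for real $k$ the matrix $S(k)e^{ikL}$ is unitary, since each vertex scattering matrix is unitary and $e^{ikL}$ is unitary for real $k$, so that the eigenvalue $1$ of $S(k)e^{ikL}$ is semisimple; combined with the analytic dependence of $S(k)e^{ikL}$ on $k$ and a local factorization of the determinant near $k$, the algebraic order of the zero matches the dimension of the eigenspace for the eigenvalue $1$. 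Controlling this order of vanishing, rather than merely its positivity, is the delicate step, and I would lean on the self-adjointness of $L(\mathrm{\Gamma})$, which forces every eigenvalue to be semisimple, together with the unitarity of $S(k)e^{ikL}$ on the real axis, to rule out any discrepancy between algebraic and geometric multiplicities.
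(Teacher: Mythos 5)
The paper does not actually supply its own proof of this lemma: it is quoted with a pointer to \cite[Theorem 3.7.1]{s} and \cite[Theorem 3.34]{5}. Your bond-scattering setup is exactly the route those references take, and the first two thirds of your argument are sound: for $k\neq 0$ the plane-wave amplitudes $\vec{a}\in\mathbb{C}^{2N}$ parametrize solutions of $-u''=k^{2}u$ bijectively, the vertex conditions become $\vec{a}=S(k)e^{ikL}\vec{a}$, and the eigenspace of $L(\mathrm{\Gamma})$ at $k^{2}$ is linearly isomorphic to $\ker\bigl(I-S(k)e^{ikL}\bigr)$, so $m_{k^{2}}=\dim\ker\bigl(I-S(k)e^{ikL}\bigr)$.

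The gap is in your final step, and it is precisely the step you flagged as delicate. Semisimplicity of the eigenvalue $1$ of the \emph{fixed} unitary matrix $U(k_{0})=S\,e^{ik_{0}L}$ says nothing about the order to which the \emph{function} $k\mapsto\det\bigl(I-U(k)\bigr)$ vanishes at $k_{0}$; for a general analytic family of matrices that are unitary on the real axis these two numbers can differ. For instance $U(k)=e^{i(k-k_{0})^{2}}$ is unitary for all real $k$, its eigenvalue $1$ at $k_{0}$ is trivially semisimple with one-dimensional kernel, yet $\det\bigl(I-U(k)\bigr)=1-e^{i(k-k_{0})^{2}}$ has a zero of order two. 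Likewise, self-adjointness of $L(\mathrm{\Gamma})$ only gives semisimplicity of the operator eigenvalue, which is already contained in your geometric count; the content of the lemma is exactly that this geometric number equals the analytic order of the root, so it cannot be invoked to close the argument. What actually closes it is the special structure of the family that you never use: since $S$ is $k$-independent and $L=\mathrm{Diag}(l_{b})$ is positive definite, one has $\frac{d}{dk}\bigl(Se^{ikL}\bigr)=Se^{ikL}\,(iL)$, i.e.\ $U(k)^{*}U'(k)=iL$. Choosing the eigenphases $\theta_{j}(k)$ of $U(k)$ analytically (Rellich), first-order perturbation theory gives $\theta_{j}'(k)=\langle\psi_{j},L\psi_{j}\rangle\geq\min_{b}l_{b}>0$, so every eigenphase crosses $1$ transversally; in the local factorization $\det\bigl(I-U(k)\bigr)=\prod_{j}\bigl(1-e^{i\theta_{j}(k)}\bigr)$ each eigenvalue equal to $1$ at $k_{0}$ then contributes a simple factor, and the order of the root equals $\dim\ker\bigl(I-U(k_{0})\bigr)$, as required. (The same positivity also disposes of the non-real $k$ allowed in the statement: $\|e^{ikL}\|<1$ in the upper half-plane and the analogous bound for $I-e^{-ikL}S^{*}$ in the lower half-plane show the secular function has no non-real zeros, matching the absence of non-real eigenvalues.) Without the positivity of the length matrix the multiplicity claim is false for general unitary analytic families, so this ingredient must be added for your proof to be complete.
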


This lemma was proven in \cite[Theorem 3.7.1]{s} and \cite[Theorem 3.34]{5}. It plays the crucial role in finding the eigenvalues of $L^{st}(\mathrm{\Gamma})$ and $L^{a/st}(\mathrm{\Gamma})$ on inequilateral metric graphs. The following calculation of the eigenvalues of $L^{st}(\mathrm{\Gamma})$ and $L^{a/st}(\mathrm{\Gamma})$ on the special inequilateral non-bipartite graph is based on Lemma \ref{1.3}.

Let graph $\mathrm{\Gamma}$ be a right triangle with edge lengths 3, 4, 5.~We denote the global scattering matrices corresponding to the standard conditions and the anti-standard conditions by $S_{st}$ and $S_{a/st}$, then $S_{st}=-S_{a/st}$.

The secular equation corresponding to the standard conditions is
\begin{center}
  $\mathrm{det}(I-S_{st}e^{ikL})=(e^{12ik}-1)^{2}=0,$
\end{center}
then we have
$$k=\dfrac{\pi}{6},~\dfrac{\pi}{6},~\dfrac{\pi}{3},~\dfrac{\pi}{3},~\dfrac{\pi}{2},~\dfrac{\pi}{2},~\dfrac{2\pi}{3},~\dfrac{2\pi}{3},~\dfrac{5\pi}{6},~\cdots .$$
Since 0 is an eigenvalue of $L^{st}(\mathrm{\Gamma})$ with multiplicity 1, then according to the Lemma \ref{1.3}, the spectrum of $L^{st}(\mathrm{\Gamma})$ is
$$\sigma(L^{st}(\mathrm{\Gamma}))=\{0,(\dfrac{\pi}{6})^{2},(\dfrac{\pi}{6})^{2},(\dfrac{\pi}{3})^{2},(\dfrac{\pi}{3})^{2},(\dfrac{\pi}{2})^{2},(\dfrac{\pi}{2})^{2},(\dfrac{2\pi}{3})^{2}(\dfrac{2\pi}{3})^{2},(\dfrac{5\pi}{6})^{2},\cdots \}.$$
The secular equation corresponding to the anti-standard conditions is
\begin{center}
  $\mathrm{det}(I-S_{a/st}e^{ikL})=(e^{12ik}+1)^{2}=0.$
\end{center}
Through a simple calculation, we can obtain that
$$k=\dfrac{\pi}{12},~\dfrac{\pi}{12},~\dfrac{\pi}{4},~\dfrac{\pi}{4},~\dfrac{5\pi}{12},~\dfrac{5\pi}{12},~\dfrac{7\pi}{12},~\dfrac{7\pi}{12}
,~\dfrac{3\pi}{4},~\cdots,$$
and the spectrum of $L^{a/st}(\mathrm{\Gamma})$ is
$$\sigma(L^{a/st}(\mathrm{\Gamma}))=\{(\dfrac{\pi}{12})^{2},(\dfrac{\pi}{12})^{2},(\dfrac{\pi}{4})^{2},(\dfrac{\pi}{4})^{2},(\dfrac{5\pi}{12})^{2},(\dfrac{5\pi}{12})^{2},(\dfrac{7\pi}{12})^{2},(\dfrac{7\pi}{12})^{2},(\dfrac{3\pi}{4})^{2},\cdots \}.$$

Let $\widetilde{\mathrm{\Gamma}}$ be an equilateral bipartite graph with edge length 1 obtained by adding some vertices on each edge of the metric graph $\mathrm{\Gamma}$, and let $\widetilde{\mathcal{A}}$ and $\widetilde{\mathcal{Z}}$ denote the adjacency matrix and corresponding transition matrix respectively. The eigenvalues of $\widetilde{\mathcal{Z}}$ are given by
$$0,0,-1,1,-\dfrac{1}{2},-\dfrac{1}{2},\dfrac{1}{2},\dfrac{1}{2},-\dfrac{\sqrt{3}}{2},-\dfrac{\sqrt{3}}{2},\dfrac{\sqrt{3}}{2},\dfrac{\sqrt{3}}{2},$$
and the eigenvalues of $L^{st}(\widetilde{\mathrm{\Gamma}})$ and $L^{a/st}(\widetilde{\mathrm{\Gamma}})$~are listed in Table~2,
\begin{center}
\begin{tabular}{c|cccc}
& standard &   anti-standard \\
\hline
   $\lambda=0$ &  $\{0\}$ &         $ \{0\}$ \\
   $ \cos\sqrt{\lambda>0}=1$ &   $\{(2k\pi)^{2}:k\in \mathbb{N}^{2}\}$ &          $\{(2k\pi)^{2}:k\in \mathbb{N}^{2}\}$ \\
    $ \sin\sqrt{\lambda>0}\neq0$ &   $\{(2k\pi\pm\frac{\pi}{6})^{2}:k\in \mathbb{N}_{0}^{2}\}\cup$ &          $\{(2k\pm1)\pi\pm\frac{\pi}{6})^{2}:k\in \mathbb{N}_{0}^{2}\}\cup$\\
    &   $\{(2k\pi\pm\frac{\pi}{3})^{2}:k\in \mathbb{N}_{0}^{2}\}\cup$ &          $\{((2k\pm1)\pi\pm\frac{\pi}{3})^{2}:k\in \mathbb{N}_{0}^{2}\}\cup$\\
     &   $\{(2k\pi\pm\frac{\pi}{2})^{2}:k\in \mathbb{N}_{0}^{2}\}\cup$ &          $\{(2k\pm1)\pi\pm\frac{\pi}{2})^{2}:k\in \mathbb{N}_{0}^{2}\}\cup$\\
     &   $\{(2k\pi\pm\frac{2\pi}{3})^{2}:k\in \mathbb{N}_{0}^{2}\}\cup$ &          $\{(2k\pm1)\pi\pm\frac{2\pi}{3})^{2}:k\in \mathbb{N}_{0}^{2}\}\cup$\\
     &   $\{(2k\pi\pm\frac{5\pi}{6})^{2}:k\in \mathbb{N}_{0}^{2}\}$ &          $\{(2k\pm1)\pi\pm\frac{5\pi}{6})^{2}:k\in \mathbb{N}_{0}^{2}\}$\\
    $ \cos\sqrt{\lambda}=-1$ &    $\{(2k+1)^{2}\pi^{2}:k\in \mathbb{N}_{0}^{2}\}$ &        $ \{(2k+1)^{2}\pi^{2}:k\in \mathbb{N}_{0}^{2}\}$ .\\
  \end{tabular}
  $$\textbf{ Table~2~}$$
\end{center}
Hence the spectra of $L^{st}(\widetilde{\mathrm{\Gamma}})$ and $L^{a/st}(\widetilde{\mathrm{\Gamma}})$~are
$$
  \sigma(L^{st}(\widetilde{\mathrm{\Gamma}}))=\sigma(L^{a/st}(\widetilde{\mathrm{\Gamma}}))=\{0,(\dfrac{\pi}{6})^{2},(\dfrac{\pi}{6})^{2},(\dfrac{\pi}{3})^{2},(\dfrac{\pi}{3})^{2},
  (\dfrac{\pi}{2})^{2},(\dfrac{\pi}{2})^{2},\quad~~$$
$$\quad\qquad\qquad \qquad \qquad\quad(\dfrac{2\pi}{3})^{2},(\dfrac{2\pi}{3})^{2},(\dfrac{5\pi}{6})^{2},
  (\dfrac{5\pi}{6})^{2},\cdots \}.
$$

We have the following results:
 $$\lambda_{k}(L^{st}(\widetilde{\mathrm{\Gamma}}))=\lambda_{k}(L^{a/st}(\widetilde{\mathrm{\Gamma}})),\quad  \lambda_{k}(L^{a/st}(\mathrm{\Gamma}))\leq \lambda_{k+1}(L^{a/st}(\widetilde{\mathrm{\Gamma}})),\quad k\in \mathbb{N}.$$

We can verify that eigenvalues of $L^{st}(\mathrm{\Gamma})$ and $L^{a/st}(\mathrm{\Gamma})$ satisfy
$$\lambda_{k+1}(L^{st}(\mathrm{\Gamma}))\geq \lambda_{k}(L^{a/st}(\mathrm{\Gamma})),\quad k\in \mathbb{N}.$$

This research was supported by the Natural Science Foundation of Hebei Province under Grant No. A2019202205. We thank the reviewers for useful comments and suggestions.

\end{document}